
\documentclass[letterpaper, 10 pt, conference]{ieeeconf}  

\IEEEoverridecommandlockouts                              

\overrideIEEEmargins                                      
\usepackage{amsmath,amssymb,amsfonts}
\usepackage{cuted}
\usepackage{flushend}
\usepackage{mathtools}
\mathtoolsset{showonlyrefs}
\usepackage{cite}

\newtheorem{theorem}{Theorem}[section]

\newtheorem{assumption}{Assumption}[section]

\newtheorem{lemma}[theorem]{Lemma}

\title{\LARGE \bf
Stealthy Cyber-Attack Design Using Dynamic Programming}

\author{Sribalaji C. Anand$^{1}$ and Andr\'e M. H. Teixeira$^{2}$
\thanks{*This work is supported by the Swedish Research Council under the grant 2018-04396 and by the Swedish Foundation for Strategic Research.}
\thanks{$^{1}$ Sribalaji C. Anand is with the Department of Electrical Engineering, Uppsala University, PO Box 65, SE-75103, Uppsala, Sweden. {\tt\small sribalaji.anand@angstrom.uu.se}}%
\thanks{$^{2}$ Andr\'e M. H. Teixeira is with the Department of Information Technology, Uppsala University, PO Box 337, SE 75105, Uppsala, Sweden. {\tt\small andre.teixeira@it.uu.se}}}
%

\begin{document}

\maketitle
\thispagestyle{empty}
\pagestyle{empty}

\begin{abstract}
This paper addresses the issue of data injection attacks on control systems. We consider attacks which aim at maximizing system disruption while staying undetected in the finite horizon. The maximum possible disruption caused by such attacks is formulated as a non-convex optimization problem whose dual problem is a convex semi-definite program. We show that the duality gap is zero using S-lemma. To determine the optimal attack vector, we formulate a soft-constrained optimization problem using the Lagrangian dual function. The framework of dynamic programming for indefinite cost functions is used to solve the soft-constrained optimization problem and determine the attack vector. Using the Karush–Kuhn–Tucker conditions, we also provide necessary and sufficient conditions under which the obtained attack vector is optimal to the primal problem. Finally, we illustrate the results through numerical examples.
\end{abstract}
\section{INTRODUCTION}\label{sec_intro}

Research in the security of industrial control systems (ICSs) has received considerable attention due to increased cyber-attacks \cite{dibaji2019systems}. The research focus can be largely classified into $(a)$ attack modelling, $(b)$ attack detection, $(c)$ attack impact assessment and $(d)$ attack treatment and prevention \cite{chong2019tutorial}. This paper focuses on the attack impact assessment problem for false data injection (FDI) attacks. 

Existing literature has discussed different aspects of impact assessment in the finite horizon. The necessary and sufficient condition for the attack impact to be bounded was formulated in \cite{teixeira2013quantifying} as a function of the system (block) Toeplitz matrix (TM). Since TM involves higher powers of the system matrices, its computation is prone to numerical errors. To this end, we provide necessary and sufficient conditions for the attack impact to be bounded without using TM.

The paper \cite{umsonst2017security} determines the highest possible attack impact (infinity norm of the attacked system states) when the attack is constrained to be stealthy. Our paper differs from \cite{umsonst2017security} in the facts that $(a)$ we consider the output-to-output gain (OOG) as an impact metric which is advantageous \cite{anandjoint}, and $(b)$ \cite{umsonst2017security} solves for the impact as the maximum of $N_hn$ convex optimization problems (here $N_h$ is the horizon length and $n$ is the order of the system), whereas we propose a single convex optimization problem for impact assessment.

An FDI attack from an optimal control framework was proposed in \cite{wu2018optimal}. In particular, it proposes a switching policy for the adversary to increase the efficacy of the attack when the number of actuators that can be attacked simultaneously is limited. However, the attacks are not stealthy, rather a bound is imposed on the attack energy. 

The authors in \cite{chen2017cyber} and \cite{djouadi2015finite} consider bounded actuator attacks which is similar to a classical $H_{\infty}$ metric based approach to attack design. However, we have shown previously in \cite{anandjoint} that the OOG based design can outperform $H_{\infty}$ based design in the infinite horizon. In \cite{wu2020zero}, the authors do not consider stealthy attacks and in \cite{fotiadis2020constrained}, the authors consider a perfectly undetectable attack whereas we consider FDI attacks in actuators and sensors but not both. Additionally, all of these works assume that the weighting matrices of the cost function are positive (semi-)definite. In this paper, we show that this is a restrictive assumption.

In \cite{an2017data}, the authors considers an adversary that maximizes the disruption whilst remaining stealthy. A data-driven adaptive DP algorithm was proposed for stealthy attack design, whereas we alternatively adopt the framework of \cite{ferrante2015note}. To this end, we present the following contributions. 
\begin{enumerate}
    \item Firstly, the worst-case impact caused by an FDI attack on the sensor or actuator channels in the finite horizon is posed as a non-convex optimization problem. It is solved through its convex dual problem, addressing the limitation of \cite{umsonst2017security}. It is shown using S-lemma that the duality gap is zero.
    \item Secondly, a soft-constrained optimization problem is formulated using the Lagrange dual function, to determine the optimal attack vector. We observe that the weight matrix of the cost function is indefinite. So, we adopt the recently proposed DP approach \cite{ferrante2015note}.
    \item Thirdly, we provide the necessary and sufficient conditions under which the attack vector is optimal (and consequently is stealthy) to the primal problem, thus partially addressing the limitation of \cite{wu2018optimal}. We outline the merits of using the framework of DP by providing insights into how the attack impact can be made bounded by means of state-feedback policies instead of using open-loop attacks computed from TM. 
    \item Finally, this paper serves as a practical application of \cite{ferrante2015note} to the security of ICS.
\end{enumerate}
\textit{Outline:} The problem is formulated in Section \ref{sec_PF}. Section \ref{sec_impact} proposes an optimization framework for determining the attack impact. Section \ref{sec_policy} adopts the DP framework to determine the optimal attack policy. It also discusses the merits of the DP framework. Section \ref{sec_example} provides a numerical illustration of the proposed optimizations frameworks. Finally, we provide concluding remarks in Section \ref{sec_conclusion}.

\section{Problem background}\label{sec_PF}

In this section, we describe the control system structure and the goal of the adversary. Consider the general description \cite{milovsevic2020security} of a finite-horizon discrete-time (DT) linear time-invariant (LTI) system with a process ($\mathcal{P}$), output feedback controller ($\mathcal{C}$) and an anomaly detector ($\mathcal{D}$) as shown in Fig. \ref{System}. The closed-loop system is represented by
\begin{align}
    \mathcal{P}: & \left\{
                \begin{array}{ll}
                    x_p[k+1] &= Ax_p[k] + B \tilde{u}[k]\\
                    y[k] &= Cx_p[k]\\
                    y_p[k] &= C_Jx_p[k] +D_J \tilde{u}[k]
                \end{array}
                \right. \label{P}\\
    \mathcal{C}: & \left\{
                 \begin{array}{ll}
                     z[k+1] &= A_cz[k]+ B_c\tilde{y}[k]\\
                     u[k] &= C_cz[k] + D_c\tilde{y}[k]
                \end{array}
                \right. \label{C}  \\
    \mathcal{D}: & \left\{
                \begin{array}{ll}
                s[k+1] &= A_es[k] +B_e u[k] + K_e \tilde{y}[k]\\
                y_r[k] &= C_es[k] +D_eu[k] + E_e \tilde{y}[k]
                \end{array}
                \right. \label{D}
\end{align}
$k=0,\dots,N_h$, where the state of the process, controller and the observer are represented by $x_p[k] \in \mathbb{R}^{n_x}, z[k] \in \mathbb{R}^{n_z}$ and $s[k] \in \mathbb{R}^{n_s}$ respectively. The control signal generated by the controller and applied to the actuator is denoted by $\tilde{u}[k] \in \mathbb{R}^{n_u}$, and $ u[k] \in \mathbb{R}^{n_u}$ respectively. The measurement output produced by the process is $y[k] \in \mathbb{R}^{n_m}$ , $\tilde{y}[k] \in \mathbb{R}^{n_m}$ is the measurement signal received by the controller and the detector, $y_p[k] \in \mathbb{R}^{n_p}$ is the virtual performance output, $y_r[k] \in \mathbb{R}^{n_r}$ is the residue generated by the detector. The closed-loop system described above is said to have a good performance over the horizon $N_h$, when the energy of the performance output ($||y_p||_{\ell_2, [0,N_h]}^2$) is small and an anomaly is said to be detected when the detector output energy ($||y_r||_{\ell_2, [0,N_h]}^2$) is greater than a predefined threshold, say $\epsilon_r$. Without loss of generality, we assume $\epsilon_r \triangleq 1$. 

\begin{figure}
    \centering
    \includegraphics[width=8.4cm]{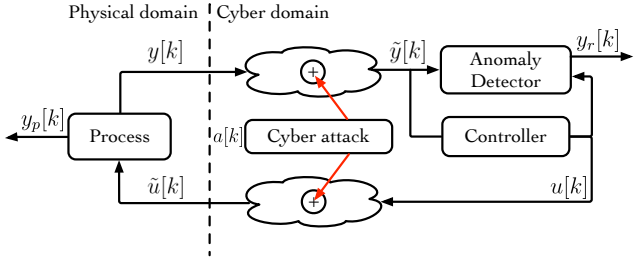}
    \caption{Control system under data injection attack}
    \label{System}
\end{figure}

\subsection{Data injection attack scenario}\label{Attack scenario:sec}
In the closed-loop system described in the previous section, we consider an adversary injecting false data into the sensors or actuators of the process. We next discuss the resources the adversary has access to.
\subsubsection{Disruption and disclosure resources}\label{disclosure:sec} 
The adversary can inject data into the sensor or actuators channels but not both (we do not consider a covert attack). This is represented by
\[ 
\begin{bmatrix}
\tilde{u}[k]\\
\tilde{y}[k]
\end{bmatrix} 
=
\begin{bmatrix}
{u}[k]\\
{y}[k]
\end{bmatrix} 
+
\begin{bmatrix}
B_a\\
D_a
\end{bmatrix} 
a[k],\] 
where $ \begin{bmatrix} B_a\\
D_a
\end{bmatrix} = \begin{bmatrix}
\Gamma_u & 0_{n_u \times n_u} \\
0_{n_m \times n_m}  & \Gamma_y
\end{bmatrix},$ and $a[k] \in \mathbb{R}^{n_a}$ is the data injected by the adversary. In general the adversary cannot access all controller/sensor channels but only a limited number of them which is captured by the rank of the matrix $\begin{bmatrix}B_a^T & D_a^T\end{bmatrix}^T$. The adversary does not have access to any disclosure (eavesdropping) resources.
\subsubsection{System knowledge} 
The adversary knows the closed-loop system. This knowledge is used by the adversary to construct the optimal attack policy. Defining ${x}[k] \triangleq [ x_p[k]^T \; z[k]^T \; s[k]^T]^T$, the closed-loop system under attack with the performance output and detection output as system outputs becomes
\begin{equation}\label{system:CL:uncertain}
                \begin{array}{ll}
                            {x}[k+1] &= {A}_{cl}{x}[k] + {B}_{cl}a[k]\\
                            y_p[k] &= {C}_p{x}[k] + {D}_p a[k]\\
                            y_r[k] &= {C}_r{x}[k] + {D}_r a[k],\\
                \end{array}
\end{equation}
and the closed-loop system matrices are given by
\begin{align*}
   {A}_{cl} &\triangleq \begin{bmatrix}
    A+BD_cC & BC_c & 0\\
    B_cC & A_c & 0\\
    (B_eD_c +K_e)C & B_eC_c & A_e
    \end{bmatrix}, \\
    {B}_{cl} &\triangleq \begin{bmatrix}
    BB_a + BD_cD_a \\ B_cD_a \\ (B_eD_c+K_e)D_a 
    \end{bmatrix},\\
    {C}_p &\triangleq \begin{bmatrix}
    C_J+D_JD_cC & D_JC_c & 0
    \end{bmatrix},\\
    {D}_p &\triangleq D_J(D_cD_a+B_a),\\
    {C}_r &\triangleq \begin{bmatrix}
    (D_eD_c + E_e)C & D_eC_c & C_e
    \end{bmatrix},\\
    {D}_r &\triangleq (D_eD_c +E_e)D_a.
\end{align*}
\subsubsection{Attack goals and constraints.}
Given the resources the adversary has access to, the adversary aims at disrupting the system's behavior while staying stealthy. The system disruption is evaluated by the increase in energy of performance output whereas, the adversary is stealthy if the energy of the detection output is below the threshold $\epsilon_r$. This leads to the optimal attack policy discussed next.
\subsection{Problem formulation}
From the previous discussions, it can be understood that the goal of the adversary is to maximize the performance cost while staying undetected. The attack policy of the adversary can be formulated as
\begin{equation}\label{opti_oog_primal}
\begin{aligned}
\gamma^*  \triangleq \sup_{a\in \ell_{2e,[0,N_h]}} \; & \Vert y_p \Vert_{\ell_2,[0,N_h]}^2 \\
\textrm{s.t.} \quad & \Vert y_r \Vert_{\ell_2,[0,N_h]}^2 \leq 1, \; x[0]=0,
\end{aligned}
\end{equation}
where $\gamma^*$ is the disruption caused by the attack signal on the system and $N_h$ is the horizon length. In the above optimization problem, the constraint $x[0]=0$ is introduced since the system is at equilibrium before the attack. 
\begin{assumption}\label{assume_equil_begin}
The closed-loop system \eqref{system:CL:uncertain} is at equilibrium $x[0]=0$ before the attack commences. $\hfill \triangleleft$
\end{assumption}

The optimization problem \eqref{opti_oog_primal} is non-convex since it has a convex objective function (which has to be maximized) with convex constraints. Thus, in the remainder of this paper, we propose methods to solve \eqref{opti_oog_primal}. In Section \ref{sec_impact}, we determine the optimal value of the optimization problem \eqref{opti_oog_primal}. Whereas in Section \ref{sec_policy}, we determine the optimal attack policy.

\section{Optimization framework for attack impact assessment}\label{sec_impact}

In this section, we determine the attack impact $\gamma^*$, which is the value of the optimization problem \eqref{opti_oog_primal}. Let us define $\textbf{a} \triangleq \begin{bmatrix} a[0]^T,\dots,a[N_h]^T \end{bmatrix}^T$, $\textbf{y}_\textbf{r} \triangleq \begin{bmatrix} y_{r}[0]^T,\dots,y_{r}[N_h]^T \end{bmatrix}^T$, and $ \textbf{y}_\textbf{p} \triangleq \begin{bmatrix} y_{p}[0]^T,\dots,y_{p}[N_h]^T \end{bmatrix}^T$.  Let us additionally define the matrices $\mathcal{T}_{r} \in \mathbb{R}^{n_r(N_h+1) \times n_u(N_h+1)}$, and $\mathcal{T}_{p} \in \mathbb{R}^{n_p(N_h+1) \times n_u(N_h+1)}$ similar to \cite[$(12)$]{teixeira2013quantifying} such that $\textbf{y}_\textbf{r} = \mathcal{T}_{r}\textbf{a}$ and $\textbf{y}_\textbf{p} = \mathcal{T}_{p}\textbf{a}$. Under these definitions, \eqref{opti_oog_primal} can be equivalently written as

\begin{equation}\label{oog_toep}
\begin{aligned}
\gamma^* = \sup_{\textbf{a}} \; &  \textbf{a}^T\mathcal{T}_p^T\mathcal{T}_p\textbf{a} \\
\textrm{s.t.} \quad &  \textbf{a}^T\mathcal{T}_r^T\mathcal{T}_r\textbf{a} \leq 1,\; x[0]=0.
\end{aligned}
\end{equation}

In the following theorem, we propose a convex dual SDP to solve the optimization problem \eqref{oog_toep}.
\begin{theorem}\label{thm_OOG}
When \textit{Assumption \ref{assume_equil_begin}} holds, the optimal value of \eqref{oog_toep} can be obtained by solving the convex SDP
\begin{equation}\label{opti_oog_dual}
\begin{aligned}
\gamma^* = \min \; & \gamma \\
\textrm{s.t.} \quad & \mathcal{T}_{p}^T\mathcal{T}_{p} -  \gamma\mathcal{T}_{r}^T\mathcal{T}_{r} \preceq 0.
\end{aligned}
\end{equation}
\end{theorem}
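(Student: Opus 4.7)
The plan is to recognise \eqref{oog_toep} as a homogeneous quadratically constrained quadratic program with a single inequality constraint, a case for which strong duality is a classical consequence of the S-lemma. First, I would form the Lagrangian $L(\mathbf{a},\lambda)=\mathbf{a}^T\mathcal{T}_p^T\mathcal{T}_p\mathbf{a}-\lambda(\mathbf{a}^T\mathcal{T}_r^T\mathcal{T}_r\mathbf{a}-1)$ with multiplier $\lambda\geq 0$, rewrite it as $\mathbf{a}^T(\mathcal{T}_p^T\mathcal{T}_p-\lambda\mathcal{T}_r^T\mathcal{T}_r)\mathbf{a}+\lambda$, and compute the dual function $g(\lambda)=\sup_{\mathbf{a}}L(\mathbf{a},\lambda)$. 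The supremum is finite (and equal to $\lambda$) if and only if $\mathcal{T}_p^T\mathcal{T}_p-\lambda\mathcal{T}_r^T\mathcal{T}_r\preceq 0$, so minimising over dual-feasible $\lambda$ reproduces \eqref{opti_oog_dual} via the identification $\gamma=\lambda$. The dual is convex because it minimises a linear functional over a linear matrix inequality, and weak duality immediately gives $\gamma^*\leq\gamma$ for every dual-feasible $\gamma$.

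Second, I would close the duality gap using the S-lemma (Yakubovich). Setting $f_0(\mathbf{a}):=\gamma-\mathbf{a}^T\mathcal{T}_p^T\mathcal{T}_p\mathbf{a}$ and $f_1(\mathbf{a}):=1-\mathbf{a}^T\mathcal{T}_r^T\mathcal{T}_r\mathbf{a}$, the primal value $\gamma^*$ is the infimum of $\gamma$ for which $f_0(\mathbf{a})\geq 0$ whenever $f_1(\mathbf{a})\geq 0$. The Slater-type condition required by the S-lemma is trivial, since $\mathbf{a}=0$ gives $f_1(0)=1>0$. The S-lemma therefore yields $\tau\geq 0$ with $f_0-\tau f_1\geq 0$ pointwise, which upon expansion reads $(\gamma-\tau)+\mathbf{a}^T(\tau\mathcal{T}_r^T\mathcal{T}_r-\mathcal{T}_p^T\mathcal{T}_p)\mathbf{a}\geq 0$ for every $\mathbf{a}$. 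Evaluating at $\mathbf{a}=0$ forces $\gamma\geq\tau$, while considering arbitrary $\mathbf{a}$ forces $\mathcal{T}_p^T\mathcal{T}_p\preceq\tau\mathcal{T}_r^T\mathcal{T}_r$; monotonicity in $\tau$ (recall $\mathcal{T}_r^T\mathcal{T}_r\succeq 0$) allows the choice $\tau=\gamma$ without loss of generality, collapsing the characterisation to $\mathcal{T}_p^T\mathcal{T}_p-\gamma\mathcal{T}_r^T\mathcal{T}_r\preceq 0$. Taking the infimum in $\gamma$ returns precisely \eqref{opti_oog_dual}, closing the duality gap.

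The step I expect to be most delicate is treating the unbounded regime, in which $\mathcal{T}_p^T\mathcal{T}_p$ has components outside the range of $\mathcal{T}_r^T\mathcal{T}_r$ and the attack impact is infinite. There \eqref{oog_toep} has supremum $+\infty$ while \eqref{opti_oog_dual} is infeasible, so the claimed equality must be read in the extended real sense; I would address this case explicitly to make sure the theorem covers both the bounded-impact and unbounded-impact situations uniformly.
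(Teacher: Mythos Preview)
Your proposal is correct and follows essentially the same route as the paper: both arguments hinge on the S-lemma applied to the pair of quadratics $1-\mathbf{a}^T\mathcal{T}_r^T\mathcal{T}_r\mathbf{a}$ and $\gamma-\mathbf{a}^T\mathcal{T}_p^T\mathcal{T}_p\mathbf{a}$, with the trivial Slater point $\mathbf{a}=0$, and then reduce the resulting scalar/LMI conditions to \eqref{opti_oog_dual}. Your preliminary Lagrangian-dual derivation and your explicit treatment of the unbounded (infeasible-dual) regime are welcome additions that the paper's proof omits, but they do not change the underlying strategy.
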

\begin{proof}
See Appendix
\end{proof}

Thus, the impact assessment problem \eqref{opti_oog_primal} was solved by a convex dual SDP \eqref{opti_oog_dual}. Using S-lemma, we also proved that the duality gap is indeed zero. As stated before, this is the advantage of our work over \cite{umsonst2017security} where it was necessary to solve $N_hn$ optimization problems. Next, we state the necessary and sufficient condition for $\gamma^*$ to be bounded in \textit{Lemma \ref{lemma_bound_T}}.
\begin{lemma}\label{lemma_bound_T}
The value of the optimization problem \eqref{opti_oog_dual} is bounded if and only if $\text{ker}(\mathcal{T}_r) \subseteq \text{ker}(\mathcal{T}_p)$.
\end{lemma}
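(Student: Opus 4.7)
The plan is to prove the two implications separately, working directly with the matrix inequality in \eqref{opti_oog_dual}: the dual problem is bounded if and only if its feasible set is nonempty, i.e., there exists a finite $\gamma$ satisfying $\mathcal{T}_p^T\mathcal{T}_p \preceq \gamma\, \mathcal{T}_r^T\mathcal{T}_r$. I would translate the kernel inclusion condition into a statement about quadratic forms on appropriate subspaces.

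For the necessity direction ($\Rightarrow$), I would suppose there exists a finite feasible $\gamma$ and pick an arbitrary $\mathbf{v}\in\ker(\mathcal{T}_r)$. Then $\mathbf{v}^T\mathcal{T}_r^T\mathcal{T}_r\mathbf{v}=0$, and the matrix inequality forces $\mathbf{v}^T\mathcal{T}_p^T\mathcal{T}_p\mathbf{v}\le 0$. Since $\mathcal{T}_p^T\mathcal{T}_p\succeq 0$, this gives $\mathcal{T}_p\mathbf{v}=0$, so $\mathbf{v}\in\ker(\mathcal{T}_p)$, establishing $\ker(\mathcal{T}_r)\subseteq \ker(\mathcal{T}_p)$. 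Equivalently, one can view this contrapositively via the primal \eqref{oog_toep}: if some $\mathbf{v}\in\ker(\mathcal{T}_r)\setminus\ker(\mathcal{T}_p)$ exists, then the scaled attack $\lambda\mathbf{v}$ is feasible for every $\lambda\in\mathbb{R}$ yet yields cost $\lambda^2\|\mathcal{T}_p\mathbf{v}\|^2\to\infty$, so by Theorem \ref{thm_OOG} the dual is unbounded.

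For the sufficiency direction ($\Leftarrow$), assuming $\ker(\mathcal{T}_r)\subseteq \ker(\mathcal{T}_p)$, I would decompose $\mathbb{R}^{n_u(N_h+1)}=\ker(\mathcal{T}_r)\oplus\ker(\mathcal{T}_r)^\perp$ and write any vector as $\mathbf{a}=\mathbf{a}_1+\mathbf{a}_2$ with $\mathbf{a}_1\in\ker(\mathcal{T}_r)^\perp$, $\mathbf{a}_2\in\ker(\mathcal{T}_r)$. The inclusion hypothesis gives $\mathcal{T}_p\mathbf{a}_2=0$, so $\|\mathcal{T}_p\mathbf{a}\|^2=\|\mathcal{T}_p\mathbf{a}_1\|^2$ and $\|\mathcal{T}_r\mathbf{a}\|^2=\|\mathcal{T}_r\mathbf{a}_1\|^2$. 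On $\ker(\mathcal{T}_r)^\perp$ the map $\mathcal{T}_r$ is injective, so there is a smallest nonzero singular value $\sigma_{\min}>0$ with $\|\mathcal{T}_r\mathbf{a}_1\|^2\ge \sigma_{\min}^2\|\mathbf{a}_1\|^2$. Combined with $\|\mathcal{T}_p\mathbf{a}_1\|^2\le \|\mathcal{T}_p\|^2\|\mathbf{a}_1\|^2$, choosing $\gamma=\|\mathcal{T}_p\|^2/\sigma_{\min}^2$ yields $\mathbf{a}^T\mathcal{T}_p^T\mathcal{T}_p\mathbf{a}\le \gamma\,\mathbf{a}^T\mathcal{T}_r^T\mathcal{T}_r\mathbf{a}$ for every $\mathbf{a}$, i.e., $\gamma$ is feasible for \eqref{opti_oog_dual}.

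The only subtle step is ensuring the restriction of $\mathcal{T}_r$ to $\ker(\mathcal{T}_r)^\perp$ has a strictly positive lower singular value; this is immediate because the domain is finite-dimensional and the restricted map is injective. Everything else is linear algebra, so I do not anticipate a genuine obstacle.
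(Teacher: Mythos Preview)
Your argument is correct. The necessity direction is immediate from evaluating the matrix inequality on $\ker(\mathcal{T}_r)$, and the sufficiency direction via the orthogonal decomposition and the positive smallest singular value of the restricted map is clean and complete. The one tacit point worth making explicit is that the dual \eqref{opti_oog_dual} carries the implicit constraint $\gamma\ge 0$ (as the paper itself invokes when checking dual feasibility in the proof of \textit{Lemma~\ref{lemma_primal_attack}}); with that, ``bounded'' is indeed equivalent to ``feasible set nonempty,'' which is what you actually prove.

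As for comparison with the paper: the paper does not give its own argument here but simply defers to \cite[\textit{Lemma~1}]{teixeira2013quantifying}. Your proof is therefore not a different route so much as a self-contained one where the paper offers none. The advantage of writing it out as you have is that the mechanism is transparent---the kernel inclusion is exactly what prevents the primal objective from blowing up along directions invisible to the detector---and no external reference is needed.
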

\begin{proof}
Refer to the proof of \cite[\textit{Lemma 1}]{teixeira2013quantifying}.
\end{proof}

When the value of the optimization problem \eqref{opti_oog_dual} is unbounded, it implies that the adversary can cause huge system disruptions without being detected. The defender would be interested in knowing the answer to: How to alter the system matrices of \eqref{system:CL:uncertain} to result in a bounded impact. The answer is not trivial since the result of \textit{Lemma \ref{lemma_bound_T}} is based on block TMs, which involves higher powers of the matrix $A$. We answer this question without using TMs in the next section through a DP framework.

\section{Dynamic programming framework for attack policy characterization}\label{sec_policy}

In this section, we first adopt the DP framework to determine the attack policy which causes the impact $\gamma^*$. Later, using the results from the DP framework, we discuss the conditions for boundedness of the attack impact. 
\subsection{Optimal attack policy}\label{sub_sec_attack}
Classically, the DP problem determines the attack (input) policy which minimizes a cost function. To this end, using the Lagrange dual function, we rewrite \eqref{opti_oog_primal} as a soft-constrained optimization problem
\begin{equation}\label{lagrange}
\begin{aligned}
\sup_{a\in \ell_{2e,[0,N_h]}} & -\left\{\sum_{k=0}^{N_h}\begin{bmatrix}
x[k]^T\\a[k]^T
\end{bmatrix}^T
\Pi
\begin{bmatrix}
x[k]\\a[k]
\end{bmatrix}\right\} + \gamma^*,
\end{aligned}
\end{equation}
where $\Pi \triangleq \begin{bmatrix}
\gamma^* C_r^TC_r -  C_p^TC_p & \gamma^* C_r^TD_r -  C_p^TD_p\\
\gamma^* D_r^TC_r -  D_p^TC_p & \gamma^* D_r^TD_r -  D_p^TD_p
\end{bmatrix}$. Since strong duality holds, the optimal value of \eqref{opti_oog_primal} and \eqref{lagrange} will be the same. We next proceed to find the optimal attack vector. In \eqref{lagrange}, the optimization variable $a$ does not affect the constant term $\gamma^*$ and thus is dropped. By altering the cost function in \eqref{lagrange}, the $\sup$ is changed to the $\inf$ operator
\begin{equation}\label{indefinite_formulation}
\begin{aligned}
- \inf_{a\in \ell_{2e,[0,N_h]}} & \left\{\sum_{k=0}^{N_h} \begin{bmatrix}
x[k]^T\\a[k]^T
\end{bmatrix}^T
\Pi
\begin{bmatrix}
x[k]\\a[k]
\end{bmatrix}\right\}.
\end{aligned}
\end{equation}

In general, the DP framework assumes that the weighting matrix $\Pi$ of the cost function is positive definite. In our case, $\Pi$ cannot be guaranteed to be positive definite as it depends on $\gamma^*$. To this end, we adopt the principle of DP for indefinite cost function \cite{ferrante2015note}. Before we present the main result, we establish the following assumption:
\begin{assumption}\label{assume_attain}
The value of \eqref{opti_oog_dual} is bounded. $\hfill \triangleleft$
\end{assumption}

Now, we are ready to present the theorem which determines the optimal attack vector based on DP.
\begin{theorem}\label{thm_DP}
The optimal attack vector $a^*[k]$ which minimizes the cost function of \eqref{indefinite_formulation}, is parameterized as a function of an arbitrary attack vector $v[k]$ as
\begin{equation}\label{attack_DP}
    {a}^*[k]=-K_kx[k] + G_kv[k], \;\forall \;0 \leq k \leq N_h
\end{equation}
where $K_k=(R+B_{cl}^TX_{k+1}B_{cl})^{+}(S^T+B_{cl}^TX_{k+1}A_{cl})$, $G_k = I_m - (R+B_{cl}^TX_{k+1}B_{cl})^{+}(R+B_{cl}^TX_{k+1}B)$ and the matrices $X_k,\; \forall\; k \in \{N,\dots,N_h\}$ are obtained by solving the generalized Riccati equation (GRE)
\begin{multline}\label{ricatti}
X_k=Q+A_{cl}^TX_{k+1}A_{cl}-(S+A_{cl}^TX_{k+1}B_{cl})\\\times (R+B_{cl}^TX_{k+1}B_{cl})^{+}(S^T+B_{cl}^TX_{k+1}A_{cl}),
\end{multline}
where $X_{N_h+1}=0$, $Q \triangleq \gamma^* C_r^TC_r -  C_p^TC_p$, $S \triangleq \gamma^* C_r^TD_r -  C_p^TD_p$, $R \triangleq \gamma^* D_r^TD_r -  D_p^TD_p$. Moreover, the  optimal value of \eqref{indefinite_formulation} is given by $-x[0]^TX_0x[0]$.
\end{theorem}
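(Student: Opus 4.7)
The plan is to apply the dynamic programming principle backwards from $k = N_h$ to $k = 0$, with a quadratic ansatz for the cost-to-go $V_k(x) = x^T X_k x$, and verify by induction that the matrices $X_k$ generated in this way satisfy the generalized Riccati equation \eqref{ricatti}. The base case is $V_{N_h+1}(x) = 0$, consistent with $X_{N_h+1} = 0$. At a generic stage $k$, the Bellman recursion reads
\begin{equation}
V_k(x[k]) = \inf_{a[k]} \left\{ \begin{bmatrix} x[k] \\ a[k] \end{bmatrix}^T \Pi \begin{bmatrix} x[k] \\ a[k] \end{bmatrix} + V_{k+1}\bigl(A_{cl}x[k] + B_{cl}a[k]\bigr) \right\}.
\end{equation}
Substituting the ansatz and expanding, the inner expression is a quadratic in $a[k]$ of the form $a[k]^T \tilde{R}_k a[k] + 2 x[k]^T \tilde{S}_k a[k] + x[k]^T \tilde{Q}_k x[k]$, where $\tilde{R}_k \triangleq R + B_{cl}^T X_{k+1} B_{cl}$, $\tilde{S}_k \triangleq S + A_{cl}^T X_{k+1} B_{cl}$, and $\tilde{Q}_k \triangleq Q + A_{cl}^T X_{k+1} A_{cl}$.

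Next I would minimize this quadratic in $a[k]$. In the classical positive-definite LQ case one completes the square and inverts $\tilde{R}_k$; here $\Pi$ is indefinite, so $\tilde{R}_k$ need not be invertible. Following the indefinite-cost Riccati framework of \cite{ferrante2015note}, I would complete the square using the Moore--Penrose pseudoinverse: if $\tilde{R}_k \succeq 0$ and $\operatorname{range}(\tilde{S}_k^T) \subseteq \operatorname{range}(\tilde{R}_k)$, then the minimum is attained on an affine subspace, the minimizer is exactly $a^*[k] = -\tilde{R}_k^{+} \tilde{S}_k^T x[k] + (I - \tilde{R}_k^{+} \tilde{R}_k) v[k]$ for arbitrary $v[k]$, matching \eqref{attack_DP} with $K_k$ and $G_k$ as stated, and the minimum value equals $x[k]^T X_k x[k]$ with $X_k$ as in \eqref{ricatti}. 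The induction then propagates down to $k=0$, yielding $\inf = x[0]^T X_0 x[0]$; together with the outer sign and Assumption \ref{assume_equil_begin}, the optimal value of \eqref{indefinite_formulation} is $-x[0]^T X_0 x[0]$, as claimed.

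The main obstacle is justifying the pseudoinverse step at every stage, i.e., establishing that $\tilde{R}_k \succeq 0$ together with the range condition $\operatorname{range}(\tilde{S}_k^T) \subseteq \operatorname{range}(\tilde{R}_k)$ is preserved throughout the backward recursion. Failing either of these would make the stagewise infimum equal to $-\infty$, collapsing the induction. This is precisely where Assumption \ref{assume_attain} intervenes: by \emph{Theorem \ref{thm_OOG}} and strong duality, boundedness of \eqref{opti_oog_dual} is equivalent to boundedness of the primal \eqref{opti_oog_primal}, which in turn forces the Lagrangian cost in \eqref{indefinite_formulation} to be bounded below for every admissible initial state. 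Applying the results of \cite{ferrante2015note} to this bounded-below indefinite LQ problem then supplies the required positive-semidefiniteness and range condition on $\tilde{R}_k$ and $\tilde{S}_k$, legitimizing the pseudoinverse form of the minimizer and completing the induction.
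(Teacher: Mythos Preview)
Your proposal is correct and takes essentially the same approach as the paper: the paper's own proof consists solely of the sentence ``Directly follows from \cite[\textit{Theorem 2.1}]{ferrante2015note},'' and your backward-induction argument with the quadratic ansatz, pseudoinverse completion of the square, and the invocation of Assumption~\ref{assume_attain} to guarantee the stagewise solvability conditions is precisely the content of that cited result specialized to the present data. In effect you have unpacked what the paper leaves as a citation, arriving at the same conclusion by the same mechanism.
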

\begin{proof}
Directly follows from \cite[\textit{Theorem 2.1}]{ferrante2015note}.
\end{proof}

\textit{Theorem \ref{thm_DP}} describes a recursive method to calculate the optimal attack vector which minimizes the cost function of \eqref{indefinite_formulation}. Next, we discuss the conditions under which the obtained attack vector is the optimal attack vector to \eqref{opti_oog_primal}. 

Let us characterize the attack vector of \eqref{indefinite_formulation} by \eqref{attack_DP}. Let $x[0]=0$ and $\textbf{v} \triangleq \begin{bmatrix} v[0]^T,\dots,v[N_h]^T \end{bmatrix}^T$. Then \eqref{attack_DP} becomes a function of only the vector $\textbf{v}$ and the system matrices. Let us define $\mathcal{T}_{pv}$ and $\mathcal{T}_{rv}$ such that $\textbf{y}_\textbf{r} = \mathcal{T}_{rv}\textbf{v}$ and $\textbf{y}_\textbf{p} = \mathcal{T}_{pv}\textbf{v}$. Let $\mathcal{A}_k \triangleq A_{cl}-B_{cl}K_k$, then $\mathcal{T}_{\alpha v},\alpha \in \{p,r\}$ is represented in \eqref{toeplitz_rpv}. Then, \textit{Lemma \ref{lemma_primal_attack}} states the necessary and sufficient conditions under which the attack vector obtained from \eqref{attack_DP} is optimal to \eqref{opti_oog_primal}.

\begin{lemma}\label{lemma_primal_attack}
Let $\gamma^*$ be the optimal value of \eqref{opti_oog_dual}. Then, any attack vector of the form $\eqref{attack_DP}$ is optimal to \eqref{opti_oog_primal} if and only if $\textbf{v}^T\mathcal{T}_{rv}^T\mathcal{T}_{rv}\textbf{v} = 1$.
\end{lemma}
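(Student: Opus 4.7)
The plan is to combine the strong duality established in \textit{Theorem \ref{thm_OOG}} with the optimality characterization of \textit{Theorem \ref{thm_DP}} to derive a linear identity relating $\|y_p\|_{\ell_2}^2$ and $\|y_r\|_{\ell_2}^2$ along any attack of the form \eqref{attack_DP}, and then translate the primal optimality condition into a constraint on $\textbf{v}$.

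First, I would observe that \eqref{lagrange} is precisely the Lagrangian dual function of \eqref{opti_oog_primal} evaluated at the optimal multiplier $\gamma^*$, so strong duality (proved via S-lemma in \textit{Theorem \ref{thm_OOG}}) implies that the supremum in \eqref{lagrange} equals $\gamma^*$. Since the constant $\gamma^*$ can be pulled out, the inner term in \eqref{indefinite_formulation} attains the value $0 = -x[0]^T X_0 x[0]$ at the optimum. By \textit{Theorem \ref{thm_DP}}, any $a^*[k]$ of the form \eqref{attack_DP} achieves this infimum, regardless of the choice of $v$. Rewriting the quadratic form using the definitions of $Q,S,R$ and $\Pi$, and setting $x[0]=0$ so that $\textbf{y}_\textbf{p} = \mathcal{T}_{pv}\textbf{v}$ and $\textbf{y}_\textbf{r} = \mathcal{T}_{rv}\textbf{v}$, the optimality identity becomes
\begin{equation}\label{plan_identity}
\textbf{v}^T\mathcal{T}_{pv}^T\mathcal{T}_{pv}\textbf{v} \;=\; \gamma^*\,\textbf{v}^T\mathcal{T}_{rv}^T\mathcal{T}_{rv}\textbf{v},
\end{equation}
which holds for every $\textbf{v}$ parameterizing an attack of the form \eqref{attack_DP}.

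For the sufficiency direction, assuming $\textbf{v}^T\mathcal{T}_{rv}^T\mathcal{T}_{rv}\textbf{v} = 1$, \eqref{plan_identity} yields $\|y_p\|_{\ell_2,[0,N_h]}^2 = \gamma^*$ while the stealth constraint $\|y_r\|_{\ell_2,[0,N_h]}^2 \le 1$ is satisfied with equality; hence the attack is feasible and achieves the supremum in \eqref{opti_oog_primal}. For the necessity direction, suppose the attack is optimal to \eqref{opti_oog_primal}; then $\textbf{v}^T\mathcal{T}_{pv}^T\mathcal{T}_{pv}\textbf{v} = \gamma^*$ and $\textbf{v}^T\mathcal{T}_{rv}^T\mathcal{T}_{rv}\textbf{v} \le 1$. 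Substituting back into \eqref{plan_identity} gives $\gamma^*(1 - \textbf{v}^T\mathcal{T}_{rv}^T\mathcal{T}_{rv}\textbf{v}) = 0$, so provided $\gamma^* \neq 0$ we obtain the desired equality. The case $\gamma^* = 0$ is trivial since every feasible attack (including the zero attack) is optimal and the statement is vacuous in the sense that there is no stealth constraint active.

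The main obstacle I anticipate is simply bookkeeping: unpacking that the infimum value $-x[0]^T X_0 x[0]$ in \textit{Theorem \ref{thm_DP}} is compatible with strong duality under $x[0]=0$ (equivalently, verifying that $X_0 \succeq 0$ is consistent along the closed-loop trajectory generated by \eqref{attack_DP}), and carefully recognizing that \eqref{plan_identity} holds as an identity in $\textbf{v}$ rather than only at a particular $\textbf{v}$. Once this identity is established, the equivalence is a direct consequence of the KKT complementary slackness condition $\gamma^*(\|y_r\|_{\ell_2}^2 - 1) = 0$.
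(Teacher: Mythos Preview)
Your proposal is correct and, in fact, cleaner than the paper's argument in some respects. Both proofs hinge on the same key identity you isolate as \eqref{plan_identity}: that along any trajectory generated by \eqref{attack_DP} with $x[0]=0$, the Lagrangian value $\gamma^*\|y_r\|^2 - \|y_p\|^2$ vanishes (this is exactly the statement that the optimal cost in \eqref{indefinite_formulation} is $-x[0]^T X_0 x[0] = 0$). The paper, however, packages this inside the full KKT machinery for nonconvex QCQPs (citing \cite{jeyakumar2007non}): it verifies the five conditions of \textit{Lemma \ref{KKT}} with $\lambda_*=\gamma^*$, using the identity to deduce stationarity (the quadratic form vanishes, and together with the dual constraint $\gamma^*\mathcal{T}_r^T\mathcal{T}_r-\mathcal{T}_p^T\mathcal{T}_p\succeq 0$ this forces the linear stationarity condition). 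Your route bypasses the KKT formalism entirely and argues directly from the value identity plus strong duality; this is more elementary and also handles the ``only if'' direction and the degenerate case $\gamma^*=0$ explicitly, neither of which the paper's proof spells out. What the paper's approach buys is a systematic template (KKT) that would extend verbatim to variants with additional quadratic constraints; what your approach buys is brevity and a self-contained argument that does not require invoking an external optimality theorem.
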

\begin{proof}
See Appendix.
\end{proof}

\textit{Lemma \ref{lemma_primal_attack}} states that any attack vector of the form \eqref{attack_DP}, which yields the detection output energy as $1$, is an optimal attack vector to \eqref{opti_oog_primal}.\\[0.5cm]
\begin{strip}
\begin{align}
\mathcal{T}_{\alpha v} = \begin{bmatrix}
D_{\alpha}G_0 & 0  & \dots & 0\\
(C_{\alpha}-D_{\alpha }K_1)B_{cl}G_0 & D_{\alpha }G_1  & \dots & 0\\
\vdots & \vdots & \ddots & \vdots \\
(C_{\alpha}-D_{\alpha}K_{N_h})\prod_{k=N_h-1}^1\mathcal{A}_kB_{cl}G_0 & (C_{\alpha}-D_{\alpha}K_{N_h})\prod_{k=N_h-1}^2\mathcal{A}_kB_{cl}G_1 & \dots & D_{\alpha}G_{N_h}
\end{bmatrix}\label{toeplitz_rpv}
\end{align}
\end{strip}
\subsection{Conditions for bounded attack impact}
In \textit{Section \ref{sub_sec_attack}}, we used the DP framework to determine the optimal attack vector. The state feedback matrices for the attack vector ($K_k$) were obtained from the solution to the GRE \eqref{ricatti}. The necessary and sufficient conditions for a solution to exist for the GRE is stated \textit{Lemma \ref{DP_sol_exist}}.
\begin{lemma}\label{DP_sol_exist}
There exists a solution to the GRE \eqref{ricatti} iff \eqref{con_1} and \eqref{con_2} holds $\forall\; k \in \{1,\dots,N_h\}$. 
\begin{align}
    R+B_{cl}^TX_kB_{cl} &\geq 0, \label{con_1}\\
     \text{ker}(R+B_{cl}^TX_kB_{cl}) &\subseteq \text{ker}(S+A_{cl}^TX_kB_{cl}). \label{con_2}
\end{align}
\end{lemma}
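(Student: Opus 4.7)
The plan is to prove the equivalence by backward induction on $k$, viewing the GRE \eqref{ricatti} as the one-step Bellman recursion associated with the DP problem \eqref{indefinite_formulation}. By the correspondence established in \textit{Theorem \ref{thm_DP}} (which rests on \cite{ferrante2015note}), a ``solution'' to the GRE at stage $k$ corresponds precisely to the existence of a finite quadratic cost-to-go $V_k(x)=x^TX_kx$ for the dynamic program truncated from stage $k$ onwards. So my task reduces to characterizing when this value function propagates consistently through the recursion.

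First I would set the base case $X_{N_h+1}=0$, which trivially satisfies any condition, and then perform the inductive step. Assuming $V_{k+1}(x)=x^TX_{k+1}x$, I would expand
\begin{equation*}
V_k(x)=\inf_{a}\Bigl\{\begin{bmatrix}x\\a\end{bmatrix}^T\Pi\begin{bmatrix}x\\a\end{bmatrix}+(A_{cl}x+B_{cl}a)^TX_{k+1}(A_{cl}x+B_{cl}a)\Bigr\}
\end{equation*}
and collect terms in $a$ to obtain an objective of the form $a^TM_ka+2x^TN_ka+(\text{terms in }x)$, where $M_k\triangleq R+B_{cl}^TX_{k+1}B_{cl}$ and $N_k\triangleq S+A_{cl}^TX_{k+1}B_{cl}$ (matching the matrices appearing in \eqref{con_1}--\eqref{con_2}).

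The central step is a standard fact about indefinite quadratic forms: for fixed $b$, the map $a\mapsto a^TMa+2b^Ta$ has a finite infimum if and only if $M\succeq 0$ and $b\in\text{Range}(M)$; in that case the infimum equals $-b^TM^+b$ and is attained at $a^*=-M^+b+(I-M^+M)v$ for arbitrary $v$. Applying this with $b=N_k^Tx$ ranging over arbitrary $x$, finiteness of $V_k$ on all of $\mathbb R^{n_x}$ is equivalent to $M_k\succeq 0$ together with $\text{Range}(N_k^T)\subseteq\text{Range}(M_k)$. Taking orthogonal complements and using symmetry of $M_k$, the range inclusion is equivalent to $\ker(M_k)\subseteq\ker(N_k)$, which is exactly \eqref{con_2}, while $M_k\succeq 0$ is \eqref{con_1}. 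When both hold, plugging $a^*$ back into the objective reproduces $V_k(x)=x^TX_kx$ with $X_k$ given precisely by the GRE \eqref{ricatti}, closing the induction; when either fails at some stage, the Bellman value ceases to be a finite quadratic, so no GRE solution with the required interpretation can exist.

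The main obstacle I anticipate is twofold. First, the notion of ``solution'' must be handled carefully: the pseudo-inverse expression in \eqref{ricatti} is formally well-defined for any matrices, so the statement is substantive only when ``solution'' is interpreted in the sense of \cite{ferrante2015note}, namely that $X_k$ represents the optimal cost-to-go. I would make this connection explicit by citing \cite[\textit{Theorem 2.1}]{ferrante2015note}, whose machinery already packages the induction above. Second, the derivation of the kernel inclusion from the range inclusion requires using symmetry of $M_k$ and the identity $\text{Range}(M)^\perp=\ker(M)$; this is a short but easy-to-slip-on algebraic step that I would state explicitly to avoid ambiguity about which matrix is being transposed.
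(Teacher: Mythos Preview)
Your proposal is correct and follows precisely the backward-induction argument underlying \cite[\textit{Theorem 2.1}]{ferrante2015note}, which is exactly what the paper invokes for this lemma; the paper itself provides no independent proof but simply refers to that result. Your explicit unpacking of the one-step quadratic minimization (finiteness iff $M_k\succeq 0$ and $\text{Range}(N_k^T)\subseteq\text{Range}(M_k)$, then passing to the kernel inclusion via symmetry of $M_k$) is the standard route and matches the cited reference, so there is nothing to correct or contrast.
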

\begin{proof}
Refer to the proof of \cite[\textit{Theorem 2.1}]{ferrante2015note}
\end{proof}

A consequence of \textit{Lemma \ref{DP_sol_exist}} is that, when \eqref{con_1} and \eqref{con_2} does not hold, the value of \eqref{lagrange} (and consequently \eqref{opti_oog_primal} due to strong duality) is unbounded. Thus, the first advantage of the DP framework is that, using the results of \textit{Lemma \ref{DP_sol_exist}}, we can characterize scenarios under which the impact \eqref{opti_oog_primal} becomes unbounded. To this end, we use \textit{Lemma \ref{DP_sol_exist}} to characterize a scenario where the impact is unbounded in \textit{Lemma \ref{limitation}}.
\begin{lemma}\label{limitation}
If $\exists s \neq 0$ such that $D_rs=0$ and $D_ps \neq 0$, then the solution to the GRE \eqref{ricatti} does not exist thus making the impact unbounded.
\end{lemma}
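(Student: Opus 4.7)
The plan is to exhibit a direct violation of the necessary condition \eqref{con_1} at the terminal step of the backward recursion, and then invoke \textit{Lemma \ref{DP_sol_exist}} together with the remark that precedes \textit{Lemma \ref{limitation}} to conclude unboundedness of the impact.

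First, I would examine the GRE at the boundary. Since $X_{N_h+1}=0$, the matrix appearing in condition \eqref{con_1} collapses to
\begin{equation*}
R + B_{cl}^T X_{N_h+1} B_{cl} = R = \gamma^{*}D_r^TD_r - D_p^TD_p.
\end{equation*}
Next, I would test this matrix against the distinguished vector $s$. Using $D_rs=0$ and $D_ps\neq 0$, a one-line computation gives
\begin{equation*}
s^T R s = \gamma^{*}\Vert D_r s\Vert^2 - \Vert D_p s\Vert^2 = -\Vert D_p s\Vert^2 < 0,
\end{equation*}
so $R$ is not positive semi-definite. Consequently \eqref{con_1} fails at the first step of the backward recursion, and by \textit{Lemma \ref{DP_sol_exist}} the GRE \eqref{ricatti} admits no solution.

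Finally, I would invoke the consequence of \textit{Lemma \ref{DP_sol_exist}} stated just before \textit{Lemma \ref{limitation}}: whenever \eqref{con_1}--\eqref{con_2} fail, the value of the soft-constrained problem \eqref{lagrange} is unbounded, and by strong duality so is \eqref{opti_oog_primal}. This yields the claim. The proof is essentially a one-line sign argument, so there is no real obstacle; the only subtlety is making sure the GRE boundary condition $X_{N_h+1}=0$ is used to reduce the condition to a statement purely about $R$, which is what makes the direction $D_rs=0,\, D_ps\neq 0$ immediately fatal independently of the closed-loop matrices $A_{cl}, B_{cl}$.
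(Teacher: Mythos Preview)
Your proof is correct and follows essentially the same route as the paper: both evaluate condition \eqref{con_1} at the terminal step of the backward recursion where the Riccati iterate vanishes, reducing the requirement to $R\succeq 0$, and then exhibit $s$ as a direction along which $s^TRs=-\Vert D_ps\Vert^2<0$. The only cosmetic difference is that you index the terminal step via $X_{N_h+1}=0$ (consistent with \textit{Theorem \ref{thm_DP}}), whereas the paper writes the same computation with $k=N_h$.
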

\begin{proof}
See Appendix
\end{proof}

Thus, if the system operator could alter the direct feed-through matrices such that the lemma conditions do not hold, the risk of having an unbounded impact can be, although not eliminated, reduced. Another advantage of using the DP framework is detailed as follows. Let us consider $\gamma^*$ as a variable. If the defender could find a bounded $\gamma^*$ such that the conditions of \textit{Lemma \ref{DP_sol_exist}} hold, the attack impact will be bounded. If the attack impact is not bounded, the defender could alter the system matrices such that the conditions hold. This would result in a bounded/lowered impact. This relationship on altering the system matrices to lower the worst-case impact was not evident from \textit{Lemma \ref{lemma_bound_T}} but now is clearer from the DP framework. A numerical illustration of the proposed approach for stealthy attack design is provided in the next section.


\section{Numerical Example}\label{sec_example}

Consider a power generating system \cite[Section 4]{park2019stealthy} as represented by \eqref{power_AB} and \eqref{power_C}. 
\begin{align}
\label{power_AB} \begin{bmatrix}
\dot{\eta}_1\\ \dot{\eta}_2 \\ \dot{\eta}_3
\end{bmatrix} &= 
\begin{bmatrix}
\frac{-1}{T_{lm}} & \frac{K_{lm}}{T_{lm}} & \frac{-2K_{lm}}{T_{lm}}\\
0 & \frac{-2}{T_h} & \frac{6}{T_h}\\
\frac{-1}{T_g R} & 0 & \frac{-1}{T_g}
\end{bmatrix}
\underbrace{\begin{bmatrix}
{\eta}_1\\ {\eta}_2 \\ {\eta}_3
\end{bmatrix}}_{\eta}
+ \begin{bmatrix}
0\\ 0 \\ \frac{1}{T_g}
\end{bmatrix}
\Tilde{u}
\end{align}
\begin{align}
\label{power_C} y &= \underbrace{ \begin{bmatrix}
1 & 0 & 0 
\end{bmatrix}}_{C}\begin{bmatrix}
\eta_1\\ \eta_2 \\ \eta_3
\end{bmatrix},\;\;
y_p = \underbrace{
\begin{bmatrix}
1 & 0 & 0\\ 0 & 1 & 0
\end{bmatrix}}_{C_p}\begin{bmatrix}
\eta_1\\ \eta_2 \\ \eta_3
\end{bmatrix}
\end{align}

Here $\eta \triangleq [df; dp + 2 dx; dx]$, $df$ is the frequency deviation in \mbox{Hz}, $dp$ is the change in the generator output per unit (\mbox{p.u.}), and $dx$ is the change in the valve position \mbox{p.u.}. The constants $T_{lm}=6, T_h=4$, and $T_g=0.2$ represent the time constants of load and machine, hydro turbine, and governor, respectively, and $R = 0.05 (Hz/p.u.)$ is the speed regulation due to the governor action. The constant $K_{lm}=1$ represents the steady state gain of the load and machine. The DT system matrices are obtained by discretizing the process using zero-order hold with a sampling time $T_s=0.1 \;\mbox{seconds}$. The DT process is stabilized with an output feedback controller of the form \eqref{C} with $D_c=19$. The detector is of the form \eqref{D} where $A_e = (A_d-K_eC_d), B_e = B_d, C_e=C_d$ and $K_e=\begin{bmatrix} 0.17 & -2.83 & -7.43 \end{bmatrix}^T$. The adversary attacks only the actuator, i.e.: $B_a = 1$ and $D_a =0$. The other unspecified matrices are zero. The system is assumed to satisfy $\eta[0]=0$. We consider a horizon length of $N_h=50$.

By solving the optimization problem \eqref{opti_oog_dual}, we obtain $\gamma^* = 4733.3$. We formulate the Lagrange dual function similar to \eqref{indefinite_formulation}. The matrices $K_k, G_k$ and $X_k$ were obtained by solving the GRE described in \textit{Theorem \ref{thm_DP}}. Using these matrices, we obtained the matrices $\mathcal{T}_{pv}$ and $\mathcal{T}_{rv}$.  According to \textit{Lemma \ref{lemma_primal_attack}}, we found the eigenvector $\textbf{v}$, corresponding to the eigenvalue $\gamma^*$ of the matrix pencil $(\mathcal{T}_{pv}^T\mathcal{T}_{pv},\mathcal{T}_{rv}^T\mathcal{T}_{rv})$. This vector is scaled such that $\textbf{v}^{T}\mathcal{T}_{rv}^T\mathcal{T}_{rv}\textbf{v} = 1$. The obtained optimal attack vector is
\begin{equation}
    v^*[k] = \begin{cases} 
      33.9024, & k = 0 \\
      0, & \text{otherwise}.
      \end{cases}
\end{equation}

The resulting attack vector obtained from \eqref{attack_DP} is shown in Fig. \ref{attack_fig}. Applying this attack signal to \eqref{system:CL:uncertain}, the performance of the system is shown in Fig \ref{energy_fig}. It can be seen that the detection output energy reaches the value $1$ which represents that the constraint of the primal problem \eqref{opti_oog_primal} is satisfied. Similarly, the performance output energy reaches the value of $4733.3$ which shows that the duality gap is zero. Finally, in this example, the matrices $D_p$ and $D_r$ are zero and the limitation described in \textit{Lemma \ref{limitation}} does not occur.

\begin{figure}
    \centering
    \includegraphics[width=8.4cm]{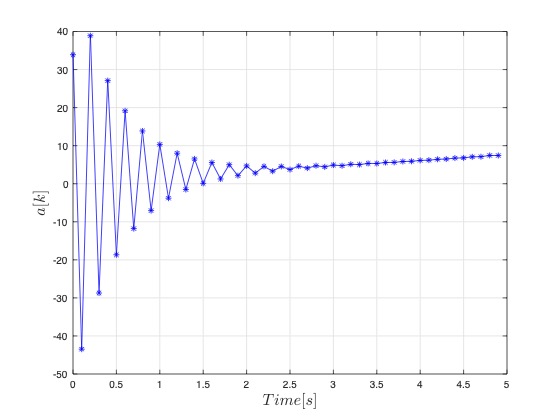}
    \caption{Optimal attack vector}
    \label{attack_fig}
\end{figure}
\begin{figure}
    \centering
    \includegraphics[width=8.4cm]{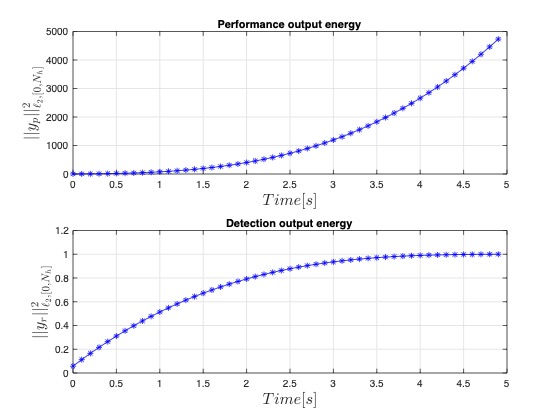}
    \caption{Performance and detection output energies}
    \label{energy_fig}
\end{figure}

\section{Conclusion}\label{sec_conclusion}

In this paper, we considered FDI attacks which aim at maximizing impact while staying undetected in the finite horizon. We formulated the impact assessment problem and it corresponded to a non-convex optimization problem. This problem was shown to be equivalent, using S-lemma, to a convex dual SDP. Secondly, we formulated a soft-constrained optimization problem using the Lagrangian dual function, to determine the optimal attack vector. The framework of DP was used to determine the optimal attack vector. We also provided the necessary and sufficient conditions under which the obtained attack vector is optimal to the primal problem. Finally, the results were illustrated through numerical examples. Future works include extending the framework to adaptive dynamic programming.

\appendix
\section*{Proof of \textit{Theorem \ref{thm_OOG}}}
Before presenting the proof, an introduction to S-lemma is provided 
\begin{lemma}\cite[\textit{Theorem 2}]{yakubovich1997s}\label{S-lemma}
Let $q_a(x)$ and $q_b(x)$ be quadratic functions. Suppose $\exists \; \bar{x}$ such that $q_a(\bar{x}) \geq 0$. Then the quadratic inequality $q_b(x) \geq 0$ is a consequence of $q_a(x) \geq 0$ if and only if $q_b(x) \geq \gamma q_a(x)$. $\hfill \square$
\end{lemma}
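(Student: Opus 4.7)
The plan is to prove the S-lemma in two directions separately, with the bulk of the work going into the ``only if'' direction.

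The sufficiency (``if'') direction is immediate and requires no structural work. If there exists $\gamma \geq 0$ such that $q_b(x) \geq \gamma q_a(x)$ holds for every $x \in \mathbb{R}^n$, then at any point with $q_a(x) \geq 0$ we obtain $q_b(x) \geq \gamma q_a(x) \geq 0$. This establishes the implication $q_a(x) \geq 0 \Rightarrow q_b(x) \geq 0$ without further argument.

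For the necessity (``only if'') direction, I would proceed via a convexity and separation argument. Consider the map $F:\mathbb{R}^n \to \mathbb{R}^2$ defined by $F(x) = (q_a(x), -q_b(x))$. The hypothesis that $q_a(x) \geq 0$ implies $q_b(x) \geq 0$ translates exactly to the statement that the image $F(\mathbb{R}^n)$ is disjoint from the open set $O = \{(r,s) : r \geq 0,\; s > 0\}$. The pivotal classical fact to invoke is the Dines convexity theorem, which asserts that for two real quadratic forms the joint image in $\mathbb{R}^2$ is convex; the inhomogeneous case (quadratic rather than purely quadratic forms) reduces to the homogeneous one by the standard homogenization trick of introducing an auxiliary variable $x_0$ and replacing $q_a(x), q_b(x)$ with their homogenizations in $(x_0, x)$. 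Once the image is convex, the separating hyperplane theorem yields a nonzero vector $(\alpha, \beta) \in \mathbb{R}^2$ and a constant $c$ with $\alpha r + \beta s \leq c$ on $F(\mathbb{R}^n)$ and $\alpha r + \beta s \geq c$ on $O$. Testing the second inequality on unbounded rays inside $O$ forces $\alpha \geq 0$, $\beta \geq 0$, and $c \leq 0$, so that $\alpha q_a(x) - \beta q_b(x) \leq 0$ for every $x$.

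The final step is to extract a valid multiplier $\gamma$ from $(\alpha, \beta)$, which is where the Slater-type condition $q_a(\bar{x}) \geq 0$ becomes essential. If $\beta > 0$, setting $\gamma = \alpha/\beta \geq 0$ immediately gives $q_b(x) \geq \gamma q_a(x)$ for all $x$, as required. The main obstacle, and the place where care is needed, is ruling out the degenerate case $\beta = 0$: that case would force $\alpha > 0$ and $q_a(x) \leq 0$ for every $x$, which is incompatible with a strict Slater point $q_a(\bar{x}) > 0$; with only the non-strict condition stated in the lemma, I would handle the boundary case by a small perturbation argument, replacing $q_a$ by $q_a + \epsilon$ for $\epsilon > 0$ (which satisfies strict Slater), obtaining multipliers $\gamma_\epsilon$, and passing to the limit using a compactness bound on $\gamma_\epsilon$. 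The chief difficulties are therefore (i) justifying or re-deriving Dines' convexity theorem, and (ii) the delicate treatment of the non-strict Slater case when extracting $\gamma$; everything else is bookkeeping around the separating hyperplane.
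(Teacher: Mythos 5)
First, note that the paper does not prove this lemma at all: it is quoted from Yakubovich and used as a black box in the proof of \textit{Theorem \ref{thm_OOG}}, so there is no in-paper argument to compare yours against. Your roadmap for proving it from scratch (trivial ``if'' direction; ``only if'' via Dines' convexity theorem for the joint image of two quadratic forms, homogenization, and a separating hyperplane) is indeed the standard modern proof of the S-lemma, and the separation bookkeeping you describe --- forcing $\alpha \geq 0$, $\beta \geq 0$, $c \leq 0$ and then dividing by $\beta$ --- is correct as far as it goes.

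There are, however, two genuine gaps. The first is in the homogenization step, which you dismiss as ``the standard trick'': Dines' theorem gives convexity only for the joint image of two \emph{homogeneous} forms (already for $n=1$ the joint image of two inhomogeneous quadratics can be a parabola, hence non-convex), so the separation must be run in $\mathbb{R}^{n+1}$ for the homogenizations $\tilde q_a,\tilde q_b$. But then you need the homogenized image to avoid the forbidden set, i.e.\ you must rule out a direction $(0,x)$ with $\tilde q_a(0,x)\geq 0$ and $\tilde q_b(0,x)<0$; this does not follow from the original implication and is exactly where the Slater point enters, via the ray $\bar x+tx$ with $t\to\infty$. Your sketch never performs this step. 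The second gap is fatal to the statement as written: with only the non-strict condition $q_a(\bar x)\geq 0$ the lemma is \emph{false}. Take $n=1$, $q_a(x)=-x^2$, $q_b(x)=x$: the implication holds vacuously on the feasible set $\{0\}$, yet $x\geq -\gamma x^2$ fails for every $\gamma\geq 0$. Consequently your proposed fix --- perturbing to $q_a+\epsilon$ and passing to the limit on the multipliers $\gamma_\epsilon$ --- breaks at the first step, because enlarging the feasible set $\{q_a+\epsilon\geq 0\}$ can destroy the hypothesis ``$q_a+\epsilon\geq 0\Rightarrow q_b\geq 0$'', so the $\gamma_\epsilon$ need not exist at all. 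The correct hypothesis (the one in Yakubovich's Theorem 2) is strict feasibility $q_a(\bar x)>0$, under which the degenerate case $\beta=0$ is excluded exactly as you argue; the statement should also be read with the quantifier ``there exists $\gamma\geq 0$ such that for all $x$'', which you correctly supply in the ``if'' direction but which the lemma as printed omits.
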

\begin{proof}[Proof of \textit{Theorem \ref{thm_OOG}}]
When \textit{Assumption \ref{assume_equil_begin}} hold, we can rewrite \eqref{oog_toep} as
\begin{equation}\label{inter_2}
\begin{aligned}
-\inf_{t} \; & -\textbf{a}^T\mathcal{T}_p^T\mathcal{T}_p\textbf{a} \\
\textrm{s.t.} \quad &  \textbf{a}^T\mathcal{T}_r^T\mathcal{T}_r\textbf{a} \leq 1
\end{aligned}
\end{equation}

Using the hypo-graph formulation, \eqref{inter_2} can be recast as
\begin{equation}\label{s5}
\begin{aligned}
-\max_{t} \; & t \\
\textrm{s.t.} \quad & -\textbf{a}^T\mathcal{T}_p^T\mathcal{T}_p\textbf{a} \geq t\; \text{whenever}\; \textbf{a}^T\mathcal{T}_r^T\mathcal{T}_r\textbf{a} \leq 1,
\end{aligned}
\end{equation}
which can again be rewritten as
\begin{equation}\label{duality_proof_ref_1}
\begin{aligned}
-\max_{t} & \;t \\
\textrm{s.t.}& 1 - \textbf{a}^T\mathcal{T}_r^T\mathcal{T}_r\textbf{a} \geq 0 \implies -\textbf{a}^T\mathcal{T}_p^T\mathcal{T}_p\textbf{a}-t \geq 0.
\end{aligned}
\end{equation}

Let us define the quadratic functions $q_a(\textbf{a}) \triangleq 1- \textbf{a}^T\mathcal{T}_r^T\mathcal{T}_r\textbf{a}$ and $q_b(\textbf{a}) \triangleq -\textbf{a}^T\mathcal{T}_p^T\mathcal{T}_p\textbf{a} - t$. Firstly, if the optimization problem \eqref{oog_toep} is feasible, then $\exists\; \bar{\textbf{a}}$ such that $q_a(\bar{\textbf{a}}) \geq 0$ is feasible. Then from \textit{Lemma \ref{S-lemma}}, it holds that $ q_a(\textbf{a}) \geq 0 \implies q_b(\textbf{a}) \geq 0$ if and only if $q_b(\textbf{a}) \geq \gamma q_a(\textbf{a})$. Using this iff relation, \eqref{duality_proof_ref_1} can be reformulated as
\begin{equation}
\begin{aligned}
-\max_{t,\gamma} \; & t \\
\textrm{s.t.} \quad & q_b(\textbf{a}) \geq \gamma q_a(\textbf{a}).
\end{aligned}
\end{equation}

Substituting the definition of the quadratic function yields
\begin{equation}\label{f1}
\begin{aligned}
-\max_{t\gamma} \; & t \\
\textrm{s.t.} \quad &  -\textbf{a}^T\mathcal{T}_p^T\mathcal{T}_p\textbf{a} - \gamma + \gamma \textbf{a}^T\mathcal{T}_r^T\mathcal{T}_r\textbf{a} \geq t.
\end{aligned}
\end{equation}

Since \eqref{f1} resembles an epigraph formulation, it can be rewritten as 
\begin{equation}\label{duality_proof_ref_2}
\begin{aligned}
-\max_{\gamma} \{ \underbrace{\min_{\textbf{a}}  \{-\textbf{a}^T\mathcal{T}_p^T\mathcal{T}_p\textbf{a} - \gamma + \gamma \textbf{a}^T\mathcal{T}_r^T\mathcal{T}_r\textbf{a}  \}}_{\kappa}\}.
\end{aligned}
\end{equation}

Since $\kappa$ is a minimization problem, it holds that
\[
\kappa = 
\begin{cases}
 -\gamma, & \text{iff}\;\; -\mathcal{T}_p^T\mathcal{T}_p + \gamma \mathcal{T}_r^T\mathcal{T}_r \geq 0\\
    -\infty,  & \text{otherwise}
\end{cases}.
\]

Therefore \eqref{duality_proof_ref_2} can be rewritten as
\begin{equation}
\begin{aligned}
\min \; & \gamma\\
\textrm{s.t.} \quad & -\mathcal{T}_p^T\mathcal{T}_p + \gamma \mathcal{T}_r^T\mathcal{T}_r  \geq 0,
\end{aligned}
\end{equation}
which concludes the proof.
\end{proof}

\section*{Proof of \textit{Lemma \ref{lemma_primal_attack}}}
Before presenting the proof, an introduction to optimality conditions is presented for general quadratically constrained quadratic problem (QCQP)
\begin{lemma}\label{KKT}\cite[\textit{ Proposition 3.3}]{jeyakumar2007non}
Consider the inequality constrained QCQP
\begin{equation}\label{opti_KKT}
\begin{aligned}
\min \; &  x^TA_0x\\
\textrm{s.t.} \quad & x^TA_1x + c_1 \leq 0,
\end{aligned}
\end{equation}
where $A_0, A_1$ are symmetric matrices and $c_1 \in \mathbb{R}$. Suppose $\exists \; x_0$ such that $x_0^TA_1x_0 + c_1 < 0$. Then $x_*$ is a global minimizer of \eqref{opti_KKT} if and only if $\exists \;x_{*}$ and $\lambda_*$ such that the following Karush–Kuhn–Tucker (KKT) conditions hold
\begin{enumerate}
    \item Primal feasibility: $x_*^TA_1x_* + c_1 \leq 0$.
    \item Dual feasibility: $\lambda_* \geq 0$.
    \item Complementary slackness: $\lambda_*(x^TA_1x + c_1) =0$.
    \item Stationarity: $ (\lambda_*A_1 + A_0)x_* =0$.
    \item $(\lambda_*A_1 + A_0) \succeq 0$ $\hfill \square$
\end{enumerate}
\end{lemma}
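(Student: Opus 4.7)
The plan is to apply the KKT characterization of Lemma \ref{KKT} to the primal QCQP \eqref{oog_toep}, recast in standard minimization form as $\min_{\textbf{a}}\; -\textbf{a}^T\mathcal{T}_p^T\mathcal{T}_p\textbf{a}$ subject to $\textbf{a}^T\mathcal{T}_r^T\mathcal{T}_r\textbf{a} - 1 \leq 0$, i.e.\ with $A_0 = -\mathcal{T}_p^T\mathcal{T}_p$, $A_1 = \mathcal{T}_r^T\mathcal{T}_r$, $c_1 = -1$. The Slater-type premise of Lemma \ref{KKT} is satisfied trivially by $\textbf{a}_0 = 0$, so KKT is both necessary and sufficient for optimality. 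My candidate multiplier will be $\lambda_* = \gamma^*$, and I will verify that each of the five KKT conditions is either automatic (from previous results) or equivalent to the claimed equation.

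As a first step I would dispose of the two conditions that do not involve $\textbf{a}_*$: dual feasibility $\gamma^* \geq 0$ is immediate since $\gamma^*$ is the supremum of a nonnegative objective, and the semidefiniteness condition $\gamma^*\mathcal{T}_r^T\mathcal{T}_r - \mathcal{T}_p^T\mathcal{T}_p \succeq 0$ is precisely the feasibility constraint of the dual SDP \eqref{opti_oog_dual} asserted by Theorem \ref{thm_OOG}. These hold for every candidate $\textbf{a}_*$ of the form \eqref{attack_DP}, independently of $\textbf{v}$.

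The second step is to show that every such $\textbf{a}_*$ automatically satisfies the stationarity condition $(\gamma^*\mathcal{T}_r^T\mathcal{T}_r - \mathcal{T}_p^T\mathcal{T}_p)\textbf{a}_* = 0$. By Theorem \ref{thm_DP} with $x[0] = 0$, such $\textbf{a}_*$ minimizes the indefinite cost in \eqref{indefinite_formulation} to the value $-x[0]^T X_0 x[0] = 0$. Using $\textbf{y}_p = \mathcal{T}_p\textbf{a}$ and $\textbf{y}_r = \mathcal{T}_r\textbf{a}$ under zero initial condition, the running cost rewrites as $\textbf{a}^T(\gamma^*\mathcal{T}_r^T\mathcal{T}_r - \mathcal{T}_p^T\mathcal{T}_p)\textbf{a}$. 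Since this quadratic form is PSD (by step one) and attains its minimum value $0$ at $\textbf{a}_*$, the minimizer must lie in its kernel, which is exactly the stationarity condition. The third step is then to translate the remaining two conditions: substituting $\mathcal{T}_r\textbf{a}_* = \mathcal{T}_{rv}\textbf{v}$, primal feasibility becomes $\textbf{v}^T\mathcal{T}_{rv}^T\mathcal{T}_{rv}\textbf{v} \leq 1$ and, since $\gamma^* > 0$, complementary slackness forces equality; conversely, the single equation $\textbf{v}^T\mathcal{T}_{rv}^T\mathcal{T}_{rv}\textbf{v} = 1$ simultaneously discharges both conditions. Together with steps one and two, all five KKT conditions hold if and only if this equation holds, yielding the lemma.

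The main obstacle I anticipate is the algebraic identification in step two that converts the time-domain sum $\sum_k [x[k]^T\; a[k]^T]\Pi[x[k]^T\; a[k]^T]^T$ into the lifted Toeplitz quadratic $\textbf{a}^T(\gamma^*\mathcal{T}_r^T\mathcal{T}_r - \mathcal{T}_p^T\mathcal{T}_p)\textbf{a}$, since this relies crucially on $x[0] = 0$ so that no boundary term survives, and on the PSD argument to pass from ``minimizer'' to ``kernel element'' (rather than only to first-order stationarity of an indefinite form). A minor edge case worth flagging is $\gamma^* = 0$: complementary slackness no longer bites, but this degenerate case either trivializes the primal or can be excluded by assuming a nontrivial impact.
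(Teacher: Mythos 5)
The statement you were asked to prove is Lemma \ref{KKT} itself --- the characterization of global optimality for a single-constraint QCQP via the five KKT-type conditions (Proposition 3.3 of \cite{jeyakumar2007non}). Your proposal does not prove this statement: it takes Lemma \ref{KKT} as given and applies it (with $A_0=-\mathcal{T}_p^T\mathcal{T}_p$, $A_1=\mathcal{T}_r^T\mathcal{T}_r$, $c_1=-1$) to establish Lemma \ref{lemma_primal_attack}. As an argument for the statement in question this is circular; nothing in your three steps addresses why conditions 1--5 are necessary and sufficient for global optimality of a generic indefinite QCQP. For the record, the paper does not prove Lemma \ref{KKT} either --- it imports it verbatim from \cite{jeyakumar2007non}. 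If a self-contained proof were wanted, sufficiency is an elementary computation: conditions 4--5 make $x_*$ a global minimizer of $x^T(A_0+\lambda_*A_1)x$ with value $0$, and then for every feasible $x$ one has $x^TA_0x \geq -\lambda_* x^TA_1x \geq \lambda_* c_1 = x_*^TA_0x_*$ using complementary slackness. Necessity is where the real content lies: it requires strong duality for the single-constraint QCQP under the Slater point $x_0$, i.e.\ precisely the S-lemma already quoted as Lemma \ref{S-lemma}.

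That said, read as a proof of Lemma \ref{lemma_primal_attack}, your argument coincides with the paper's appendix proof almost line for line: same $A_0$, $A_1$, $c_1$, same Slater point $\textbf{a}_0=0$, same multiplier $\lambda_*=\gamma^*$, same discharge of conditions 2 and 5 via the dual SDP \eqref{opti_oog_dual}, and the same use of Theorem \ref{thm_DP} with $x[0]=0$ to obtain stationarity. You are in fact slightly more careful than the paper on two points: you make explicit the PSD-kernel step that converts ``the quadratic form attains its minimum value $0$ at $\textbf{a}_*$'' into ``$\textbf{a}_*$ lies in the kernel,'' and you flag the degenerate case $\gamma^*=0$ in which complementary slackness no longer forces $\textbf{v}^T\mathcal{T}_{rv}^T\mathcal{T}_{rv}\textbf{v}=1$ (so the ``only if'' direction of Lemma \ref{lemma_primal_attack} as stated needs $\gamma^*>0$). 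But none of this bears on the lemma you were actually asked to prove.
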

\begin{proof}[Proof of \textit{Lemma \ref{lemma_primal_attack}}]
Consider the optimization problem \eqref{opti_oog_primal} which can be reformulated as \eqref{inter_2}. To use the result of \textit{Lemma \ref{KKT}}, let $A_0 = - \mathcal{T}_p^T\mathcal{T}_p, A_1 = \mathcal{T}_r^T\mathcal{T}_r$ and $c_1 = -1$. We know that $\exists\; \textbf{a}_0 \triangleq 0$, such that $\textbf{a}_0^T\mathcal{T}_r^T\mathcal{T}_r\textbf{a} - 1 <0$. It then follows that any primal argument (the attack vector $\textbf{a}$) satisfying all the KKT conditions, along with a dual argument (the Lagrange multiplier $\gamma^*$), is a globally optimal primal attack vector. 

We show in the proof below that, when $\gamma^*$ is the optimal value of \eqref{opti_oog_dual}, any attack vector of the form \eqref{attack_DP}, which fulfils the conditions of \textit{Lemma \ref{lemma_primal_attack}}, satisfies all the KKT conditions and thus proving the statement of \textit{Lemma \ref{lemma_primal_attack}}. To begin with, with an abuse of notation, let the stacked attack vector resulting from \eqref{attack_DP} be represented by \textbf{a}.

\textit{Primal feasibility} requires that 
\begin{equation}\label{proof_KKT_1}
\textbf{a}^T\mathcal{T}_r^T\mathcal{T}_r\textbf{a} \leq 1.   
\end{equation}
Given that the attack vector is of the form \eqref{attack_DP}, \eqref{proof_KKT_1} can be rewritten as $\textbf{v}^T\mathcal{T}_{rv}^T\mathcal{T}_{rv}\textbf{v} \leq 1$. From the lemma statement, we know that $\textbf{v}^T\mathcal{T}_{rv}^T\mathcal{T}_{rv}\textbf{v} = 1$.

\textit{Dual feasibility} requires that $\gamma^* \geq 0$. This is satisfied since it is a constraint to the optimization problem \eqref{opti_oog_dual}.

\textit{Complementary slackness} holds if $\gamma^*(\textbf{a}^T\mathcal{T}_r^T\mathcal{T}_r\textbf{a} - 1)= 0$. We showed in KKT condition 1 that $\textbf{a}^T\mathcal{T}_r^T\mathcal{T}_r\textbf{a}-1 = 0$. Thus complementary slackness holds.

\textit{Stationarity} requires that $ (\gamma^*\mathcal{T}_r^T\mathcal{T}_r-\mathcal{T}_p^T\mathcal{T}_p)\textbf{a}=0$. Simplifying it further, we obtain
\begin{equation}\label{KKT_4_s1}
(\gamma^*\mathcal{T}_{rv}^T\mathcal{T}_{rv}-\mathcal{T}_{pv}^T\mathcal{T}_{pv})\textbf{v}=0.
\end{equation}
To this end, consider the term $\textbf{v}^T(\gamma^*\mathcal{T}_{rv}^T\mathcal{T}_{rv}-\mathcal{T}_{pv}^T\mathcal{T}_{pv})\textbf{v}$. This term can be rewritten as the cost function of \eqref{indefinite_formulation}. From \textit{Theorem \ref{thm_DP}}, we know that the optimal value of the cost function can be characterized as $-x[0]^TX_0x[0]$. Since $x[0] = 0$, \eqref{KKT_4_s2} holds from which \eqref{KKT_4_s1} follows. 
\begin{equation}\label{KKT_4_s2}
\textbf{v}^T(\gamma^*\mathcal{T}_{rv}^T\mathcal{T}_{rv}-\mathcal{T}_{pv}^T\mathcal{T}_{pv})\textbf{v}=\textbf{a}^T(\gamma^*\mathcal{T}_r^T\mathcal{T}_r-\mathcal{T}_p^T\mathcal{T}_p)\textbf{a}=0.
\end{equation}

\textit{KKT condition 5} requires that  requires that
\begin{equation}\label{KKT_p1}
\gamma^*\mathcal{T}_r^T\mathcal{T}_r - \mathcal{T}_p^T\mathcal{T}_p \succeq 0.
\end{equation}
Since \eqref{KKT_p1} is a constraint of \eqref{opti_oog_dual}, KKT condition 5 holds. We have thus proven that that the attack vector of the form \eqref{attack_DP}, when satisfying the condition of \textit{Lemma \ref{lemma_primal_attack}}, is a global maximizer to the optimization problem \eqref{oog_toep} (or minimizer to the optimization problem \eqref{inter_2}). This concludes the proof.
\end{proof}

\section*{Proof of \textit{Lemma \ref{limitation}}}
\begin{proof}
Let us assume that $\exists s \neq 0$ such that $D_rs=0$ and $D_ps \neq 0$. From \textit{Theorem \ref{thm_DP}}, we know that $R = \gamma^*D_r^TD_r-D_p^TD_p$ and $X_{N_h} = 0$. Then, from \textit{Lemma \ref{DP_sol_exist}}, for a solution to exist for the GRE, \eqref{con_1} should hold. Let $k=N_h$, then using the definition of $R$ and $X_{N_h}$ in \eqref{con_1} yields
\begin{equation}\label{s3}
    R+B_{cl}^TX_kB_{cl} = \gamma^*D_r^TD_r-D_p^TD_p \succeq 0.
\end{equation}
By the definition of positive definiteness, \eqref{s3} can be equivalently written as
\begin{equation}\label{s4}
    x^T(\gamma^*D_r^TD_r-D_p^TD_p)x \geq 0 \;\; \forall x \in \mathbb{R}^{n_a}.
\end{equation}
Let $x=s$ and from the lemma statement we know that $s^TD_r^TD_rs =0$. Therefore, for \eqref{s4} to hold, it should hold that $-sD_p^TD_ps \geq 0$. This inequality cannot hold since $sD_p^TD_ps > 0$. This concludes the proof.
\end{proof}

\bibliographystyle{ieeetr}
\bibliography{ref}
\clearpage
\end{document}